\numberwithin{equation}{section}
\theoremstyle{plain}
\newtheorem{theorem}{Theorem}[section]
\newtheorem{corollary}[theorem]{Corollary}
\newtheorem{lemma}[theorem]{Lemma}
\theoremstyle{remark}
\newtheorem{remark}[theorem]{Remark}
\theoremstyle{definition}
\newtheorem{definition}[theorem]{Definition}
\newcommand{\R}{\mathbb{R}}
\newcommand{\Rn}{\R^n}
\newcommand{\N}{\mathbb{N}}
\newcommand{\roo}{\varrho}
\DeclareMathOperator{\por}{por}
\DeclareMathOperator{\dimM}{dim_M}
\DeclareMathOperator{\dimH}{dim_H}
\DeclareMathOperator{\dimp}{dim_p}
\DeclareMathOperator{\dist}{dist}
\DeclareMathOperator{\proj}{proj}
\begin{document}

\title{Asymptotically sharp dimension estimates for $k$-porous sets}

\author[E.\ J\"arvenp\"a\"a \and M.\ J\"arvenp\"a\"a \and A.\
  K\"aenm\"aki \and V.\ Suomala]{Esa J\"arvenp\"a\"a \and Maarit
  J\"arvenp\"a\"a \and Antti K\"aenm\"aki \and \\ Ville Suomala} 

\address{Department of Mathematics and Statistics \\
         P.O. Box 35 (MaD) \\
         FIN-40014 University of Jyv\"askyl\"a \\
         Finland}
\email{esaj@maths.jyu.fi}
\email{amj@maths.jyu.fi}
\email{antakae@maths.jyu.fi}
\email{visuomal@maths.jyu.fi}

\thanks{EJ and MJ acknowledge the support of the Academy of Finland 
(projects \#208637 and \#205806), and VS is indebted to the Finnish
Graduate School of Mathematical Analysis and to the Yrj\"o, Vilho, and Kalle 
V\"ais\"al\"a Fund.}
\subjclass[2000]{28A75, 28A80}
\keywords{(uniform) $k$-porosity, upper Minkowski dimension, packing 
dimension}
\date{October 2004}

\begin{abstract}
  In $\R^n$, we establish an asymptotically sharp upper bound for the 
  upper Minkowski dimension of $k$-porous sets having holes of certain size
  near every point in $k$ orthogonal directions at all small scales.
  This bound tends to $n-k$ as $k$-porosity tends to
  its maximum value. 
\end{abstract}

\maketitle

\section{Introduction and notation}

The well-known results on dimensional properties of porous sets $A\subset\R^n$ 
having holes of certain size at all small scales
deal with the Hausdorff dimension, $\dimH$, and the following definition 
of porosity:
\begin{equation}\label{eq:origpor}
\por(A)=\inf_{x\in A}\por(A,x),
\end{equation}
where
\begin{equation}\label{eq:origlocpor}
\por(A,x)=\liminf_{r\downarrow0}\por(A,x,r)
\end{equation}
and
\begin{equation}\label{eq:origscalepor}
\por(A,x,r)=\sup\{\rho:\;\text{there is }z\in\R^n\text{ such that }
B(z,\rho r)\subset B(x,r)\setminus A\}.
\end{equation}
Here $B(x,r)$ is a closed ball with centre at $x$ and radius $r>0$.
Mattila \cite{M1} proved that if $\por(A)$ is close to the maximum value 
$\frac12$, then $\dimH(A)$ cannot be much bigger than $n-1$. 
Salli \cite{sa2}, in turn, verified the corresponding fact for both the upper 
Minkowski dimension of uniformly porous sets and the packing dimension of 
porous sets, and in addition to this, confirmed the
correct asymptotic behaviour for the dimension estimates when porosity tends
to $\frac12$. For other related results on porous sets and measures, 
see \cite{BS},  \cite{EJJ}, \cite {JJ}, \cite{JJ1}, \cite{KR}, \cite{MM}, 
and \cite{MMPZ}.

Clearly, $n-1$ is the best possible upper bound for the dimension of a set
having maximum porosity; any hyperplane serves as an example. Whilst 
a hyperplane has holes of maximum size in one direction which is perpendicular
to the plane,  a $k$-dimensional plane has $n-k$ orthogonal directions with 
maximum holes. Intuitively, it seems natural to expect that the more such 
directions the set has, the smaller its dimension should be.
For examples of Cantor sets, see \cite{sa2} and \cite{KS}.
This leads to the following generalisations of 
\eqref{eq:origpor}--\eqref{eq:origscalepor} introduced in \cite{KS}:

\begin{definition}\label{def:KSpor}
Let $k$ and $n$ be integers with $1\le k\le n$. For all $A\subset\R^n$,
$x \in \R^n$, and $r > 0$, we set 
\begin{align*} %\label{eq:porodef1}
  \por_k(A,x,r) = \sup\{&\roo : \;\text{there are }
  z_1,\ldots,z_k \in \R^n \text{ such that for every }i\\
 &B(z_i,\roo r)\subset B(x,r)\setminus A \notag  
  \text{ and }(z_i-x)
  \cdot(z_j-x)=0\text{ if } j\neq i\}. \notag
\end{align*}
Here $\cdot$ is the inner product.
The \emph{$k$-porosity} of $A$ at a point $x$ is defined to be
\begin{equation*}
  \por_k(A,x) = \liminf_{r \downarrow 0} \por_k(A,x,r),
\end{equation*}
  and the \emph{$k$-porosity} of $A$ is given by
\begin{equation*}
 \por_k(A) = \inf_{x \in A} \por_k(A,x).
\end{equation*}
\end{definition}

Note that $\por_1(A)=\por(A)$ for all $A\subset\mathbb R^n$. As verified
by K\"aenm\"aki and Suomala in \cite{KS} as a consequence of a conical density
theorem, Definition \ref{def:KSpor} gives necessary tools for extending
Mattila's result to the setting described heuristically above.
Indeed, it turns out that the Hausdorff dimension of any set having
$k$-porosity close to $\frac12$ cannot be much bigger than $n-k$, see \cite[Theorem 3.2]{KS}.
In this paper we generalise this result for the upper Minkowski and
packing dimensions using completely different 
methods. Our main results, Theorem \ref{thm:Mdim} and Corollary
\ref{cor:pdim}, may be viewed as extensions of Salli's results to
$k$-porosity as well. However, in the case $k=1$ the proof we give is somewhat 
simpler than that of Salli's. The dimension estimates we establish are 
asymptotically sharp, see Remark \ref{sharp}.  

We complete this section by introducing the notation we use.  
For integers $0\leq m\leq n$, let $G(n,m)$ be the Grassmann
manifold of all $m$-dimensional linear subspaces of $\Rn$. When
$V\in G(n,m)$, the orthogonal projection onto $V$ is denoted by $\proj_V$. If
$0<\alpha<1$, $V\in G(n,m)$, and $x\in\Rn$, we define
\begin{equation*}
X(x,V,\alpha) = \{y\in\R^n :|\proj_{V^\perp}(y-x)|\leq \alpha|y-x| \},
\end{equation*} 
where $V^\perp\in G(n,n-m)$ is the orthogonal complement of $V$. 
Furthermore, given $V\in G(n,m)$ and $0<\alpha<1$, we say that a set 
$A \subset\Rn$ is \emph{$(V,\alpha)$-planar} if 
\begin{equation*}
A \subset X(x,V,\alpha)
\end{equation*}
for all $x\in A$. The set $A$ is called \emph{$(m,\alpha)$-planar} if it is
$(V,\alpha)$-planar for some $V\in G(n,m)$.  

Let $S^{n-1}$ be the unit sphere in $\mathbb R^n$.
For  the half-spaces we use the notation
\begin{equation*}
H(x,\theta) = \{y\in \R^n : (y-x)\cdot\theta>0\},
\end{equation*}
where $\theta\in S^{n-1}$ and $x\in\Rn$. Moreover, $\partial A$ 
is the boundary of a set $A\subset\mathbb R^n$ and 
$A(r)=\{x\in\Rn : \dist(x,A)\leq r\}$ for all  $r>0$. 

There are many equivalent ways to define the Minkowski dimension of
a given bounded set $A\subset\Rn$, see \cite[\S 5.3]{M}. For us
it is convenient to use the following: Letting $0<\delta<1$ and 
$i\in\N$, we denote by $N(A,\delta,i)$ the minimum number of balls of radius 
$\delta^i$ that are needed to cover $A$. The upper Minkowski
dimension of $A$ is defined by setting
\begin{equation*} 
\dimM(A)=\limsup_{i\to\infty}
\frac{\log N(A,\delta,i)}{\log(\delta^{-i})}. 
\end{equation*}
It is easy to see that this definition does not depend on the choice
of $\delta$. The Hausdorff and packing dimensions, see 
\cite[\S 4.8 and \S 5.9]{M}, are denoted by $\dimH$ and $\dimp$, respectively.

\section{Dimension estimates for $k$-porous sets}

For the purpose of verifying our main results, Theorem \ref{thm:Mdim} and
Corollary \ref{cor:pdim}, we need three technical lemmas. The first one, Lemma
\ref{lemma:hballs}, dealing with $k$-porous sets, follows easily from the 
definitions. The remaining ones, Lemmas \ref{lemma:rec} and 
\ref{lemma:hspaces}, are related to $(m,\alpha)$-planar sets.

For $\sqrt{2}-1<\roo<\tfrac{1}{2}$, we define
\begin{equation}\label{deft}
    t(\roo)=\frac 1{\sqrt{1-2\roo}}
\end{equation}
and
\begin{equation}\label{defrho}
    \delta(\roo)=\frac{1-\roo-\sqrt{\roo^2+2\roo-1}}{\sqrt{1-2\roo}}\,.
\end{equation}
Notice that  
\begin{equation}\label{eq:roo}
  0<\delta(\roo)<4\sqrt{1-2\roo}
\end{equation}
and, in particular, $\delta(\roo)\to 0$ as $\roo\to \tfrac12$. 

The first lemma is a quantitative version of the following simple fact:
Assuming that $\por_k(A,x,R)>\roo$, there exists $z$ such that
$B(z,\roo R)\subset B(x,R)\setminus A$. If $R$ is much larger than $r$, then
$\partial B(z,\roo R)\cap B(x,r)$ is nearly like a piece of a hyperplane. 
Therefore one will not lose much if $A\cap B(x,r)\setminus B(z,\roo R)$
is replaced by $A\cap B(x,r)\setminus H$, where $H$ is a suitable 
half-space. The advantage of this replacement is that $B(x,r)\setminus H$
is convex, whilst $B(x,r)\setminus B(z,\roo R)$ is not.   

\begin{lemma}\label{lemma:hballs}
Given $\sqrt{2}-1<\roo<\tfrac{1}{2}$ and $r_0>0$, assume that $A\subset\Rn$ 
is such that $\por_k(A,x,r)>\roo$ for all $x\in A$ and $0<r<r_0$. 
Then, taking $t=t(\roo)$ as in \eqref{deft}, for any $0<r<\frac{r_0}{2t}$,
$x\in A$, and $y\in A\cap B(x,r)$, there are  orthogonal vectors 
$\theta_1,\dots,\theta_k\in S^{n-1}$ such that for all $i\in\{1,\ldots,k\}$
\begin{equation}\label{empty}
A\cap B(x,r)\cap H(y+2\delta r\theta_i,\theta_i)=\emptyset,
\end{equation}
where $\delta=\delta(\roo)$ is as in \eqref{defrho}.
\end{lemma}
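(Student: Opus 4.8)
The plan is to use the hypothesis a single time, at the point $y$ and the scale $R:=2t(\roo)\,r$. The assumption $0<r<\frac{r_0}{2t}$ is exactly what guarantees $0<R<r_0$, so $\por_k(A,y,R)>\roo$, and Definition~\ref{def:KSpor} then furnishes $z_1,\dots,z_k\in\R^n$ with $(z_i-y)\cdot(z_j-y)=0$ whenever $i\neq j$ and $B(z_i,\roo R)\subset B(y,R)\setminus A$ for each $i$. Since $y\in A$, it cannot lie in any hole $B(z_i,\roo R)$, so $|z_i-y|>\roo R>0$; I would then take $\theta_i:=(z_i-y)/|z_i-y|\in S^{n-1}$, which are pairwise orthogonal because the vectors $z_i-y$ are. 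These are the vectors claimed in the lemma, and it remains only to verify \eqref{empty} for a fixed~$i$.

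To do that I would argue by contradiction. After translating so that $y=0$, suppose some $w\in A\cap B(x,r)$ satisfies $s:=w\cdot\theta_i>2\delta r$, and set $d:=|z_i|$, so that $z_i=d\theta_i$. The inclusion $B(z_i,\roo R)\subset B(0,R)$ gives $\roo R<d\leq(1-\roo)R$, while $w\in B(x,r)$ together with $|x-y|\leq r$ gives $|w|\leq 2r$, hence also $s\leq|w|\leq 2r$. The aim is to show $w\in B(z_i,\roo R)$, which is impossible since that ball avoids $A$. From $|w-z_i|^2=|w|^2-2ds+d^2\leq 4r^2-2ds+d^2$ it suffices to prove that the upward-opening parabola $g(d):=d^2-2sd+4r^2-\roo^2R^2$ is $\leq0$ at our $d$, that is, that $d$ lies between the two roots $s\pm\sqrt{s^2+\roo^2R^2-4r^2}$ of $g$.

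The proof then comes down to two inequalities. First, the roots are real and the smaller one is below $\roo R$: indeed $\roo^2R^2-4r^2=\tfrac{4r^2(\roo^2+2\roo-1)}{1-2\roo}>0$ precisely because $\sqrt2-1<\roo<\tfrac12$, and this positivity forces $\sqrt{s^2+\roo^2R^2-4r^2}>s$, so the smaller root is negative. Second, the larger root is at least $(1-\roo)R$: since $s\leq 2r<(1-\roo)R$ we have $(1-\roo)R-s>0$, so squaring the desired inequality $(1-\roo)R-s\leq\sqrt{s^2+\roo^2R^2-4r^2}$ and invoking $(1-2\roo)R^2=4r^2$ — equivalently $(1-2\roo)\,t(\roo)^2=1$, which is what makes $R=2t(\roo)r$ the correct scale — turns it into the requirement $s\geq\frac{2r\sqrt{1-2\roo}}{1-\roo}$; as $s>2\delta r$, it is enough that $\delta(1-\roo)\geq\sqrt{1-2\roo}$, and substituting the formula \eqref{defrho}, rearranging, and squaring reduce this to $(2\roo-1)^2\geq0$. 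Hence $g(d)\leq0$, so $|w-z_i|\leq\roo R$, contradicting $w\in A$, and \eqref{empty} follows. I expect this final algebraic reduction to be the one genuinely delicate point: the constants in \eqref{deft} and \eqref{defrho} are tuned so that these estimates become equalities in the limit $\roo\to\tfrac12$, and optimising over the scale $R$ in the computation above is precisely what forces the factor $2$ appearing both in $R=2t(\roo)r$ and in the hypothesis $r<\frac{r_0}{2t}$.
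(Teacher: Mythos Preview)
Your argument is correct. The paper itself does not give a proof here: it simply invokes Definition~\ref{def:KSpor} together with \cite[Lemma~3.1]{KS}, whereas you supply the full computation. Your strategy---apply the porosity hypothesis once at $y$ and at the inflated scale $R=2t(\roo)r$, take $\theta_i=(z_i-y)/|z_i-y|$, and then show that any putative $w\in A\cap B(x,r)$ with $(w-y)\cdot\theta_i>2\delta r$ must actually lie in the hole $B(z_i,\roo R)$---is exactly the mechanism sketched informally in the paragraph preceding the lemma, and is presumably the content of the cited result from \cite{KS}. The algebraic reduction is clean: the key identities $(1-2\roo)R^2=4r^2$ and $(1-\roo)^2(\roo^2+2\roo-1)=\roo^4-(2\roo-1)^2$ are precisely what make the constants \eqref{deft}--\eqref{defrho} the right ones, and your verification that $\delta(\roo)(1-\roo)\ge\sqrt{1-2\roo}$ reduces to $(2\roo-1)^2\ge0$ is correct. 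One minor remark: from $\por_k(A,y,R)>\roo$ you strictly speaking get holes of radius $\roo'R$ for some $\roo'>\roo$, but since $B(z_i,\roo R)\subset B(z_i,\roo'R)$ and $|z_i-y|\le(1-\roo')R\le(1-\roo)R$, your estimates go through unchanged.
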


\begin{proof} 
The claim follows directly from Definition \ref{def:KSpor} and 
\cite[Lemma 3.1]{KS}. 
\end{proof}

\begin{lemma}\label{lemma:rec}
For all $0<\alpha<1$ there is a positive integer $M=M(n,\alpha)$ such
that if $C\subset\Rn$ is convex, then $\partial C$ can be
decomposed into $M$ parts all of which are $(n-1,\alpha)$-planar.
\end{lemma}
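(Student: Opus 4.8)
The plan is to reduce the statement to a local, pointwise claim about the boundary of a convex body and then cover the sphere of outward normal directions by finitely many small caps. Concretely, for a convex set $C\subset\Rn$ and a boundary point $p\in\partial C$ at which $C$ has a supporting hyperplane with (an) outward unit normal $\nu(p)\in S^{n-1}$, I would first observe that $C$ lies in the half-space $\{y:(y-p)\cdot\nu(p)\le 0\}$, so $\partial C$ lies in that half-space as well. The key geometric estimate is this: if $p,q\in\partial C$ and their outward normals $\nu(p),\nu(q)$ satisfy $\nu(p)\cdot\nu(q)\ge 1-\eta$ for a suitably small $\eta=\eta(\alpha)>0$, then $q-p$ makes an angle at least $\pi/2-c\sqrt{\eta}$ with $\nu(p)$; equivalently $|\proj_{V^\perp}(q-p)|\le\alpha|q-p|$ where $V=\nu(p)^\perp\in G(n,n-1)$. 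To see this, apply the half-space containment at $p$ to get $(q-p)\cdot\nu(p)\le0$ and at $q$ to get $(p-q)\cdot\nu(q)\le0$, i.e. $(q-p)\cdot\nu(q)\ge0$; subtracting, $(q-p)\cdot(\nu(p)-\nu(q))\le0$, so $(q-p)\cdot\nu(p)\le(q-p)\cdot\nu(q)$. Combining $(q-p)\cdot\nu(p)\le0$ with $|\nu(p)-\nu(q)|\le\sqrt{2\eta}$ via Cauchy--Schwarz bounds $|(q-p)\cdot\nu(p)|$ by $\sqrt{2\eta}\,|q-p|$, which is at most $\alpha|q-p|$ once $\eta\le\alpha^2/2$. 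Hence the portion of $\partial C$ whose outward normals all lie in a spherical cap of angular radius $\arccos(1-\eta)$ around a fixed direction is $(V,\alpha)$-planar for the corresponding $V$.

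Next I would cover $S^{n-1}$ by $M=M(n,\alpha)$ open caps $U_1,\dots,U_M$ each of angular radius at most $\tfrac12\arccos(1-\eta)$ — the existence of such a finite cover with $M$ depending only on $n$ and $\eta$, hence only on $n$ and $\alpha$, is a standard compactness/volume argument. For each $j$ pick $\theta_j$ at the center of $U_j$ and set $V_j=\theta_j^\perp\in G(n,n-1)$. Define $\partial_j C$ to be the set of $p\in\partial C$ admitting \emph{some} outward normal $\nu(p)$ with $\nu(p)\in U_j$ (for a boundary point with several supporting normals, just assign it to one such $j$). Then $\partial C=\bigcup_{j=1}^M\partial_j C$, because every boundary point of a convex set has at least one supporting hyperplane, hence at least one outward normal, which lands in some $U_j$. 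By the estimate above, any two points of $\partial_j C$ have normals within angle $\arccos(1-\eta)$, so each $\partial_j C$ is $(V_j,\alpha)$-planar, i.e. $(n-1,\alpha)$-planar.

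The main obstacle is not any single step but making sure the normal-direction bookkeeping is airtight: convex bodies can have boundary points (edges, vertices) with a whole cone of supporting normals, and low-dimensional or unbounded $C$ must be handled. I would dispose of these as follows: the assignment $p\mapsto$ (an index $j$ with an admissible normal in $U_j$) needs no measurability — we only need a set-theoretic decomposition into $M$ pieces, which exists because the normal cones cover $S^{n-1}$ cap-wise. If $C$ has empty interior it is contained in an affine hyperplane and $\partial C=C$ is trivially $(n-1,0)$-planar, so $M=1$ works. If $C$ is unbounded the same supporting-hyperplane argument applies verbatim, since convexity alone gives supporting half-spaces at every boundary point. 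Finally, the constant: $\eta$ can be taken to be $\alpha^2/2$ (or any smaller positive value), and then $M(n,\alpha)$ is the cap-covering number of $S^{n-1}$ at scale $\tfrac12\arccos(1-\alpha^2/2)$, which is finite and depends only on $n$ and $\alpha$, as required.
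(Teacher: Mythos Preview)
Your proof is correct and follows essentially the same route as the paper: assign to each boundary point an outward unit normal via a supporting half-space, cover $S^{n-1}$ by finitely many small caps, and take the preimages as the pieces. One small slip to fix: your key estimate yields $q\in X(p,\nu(p)^\perp,\alpha)$ with the \emph{varying} hyperplane $\nu(p)^\perp$, whereas $(V_j,\alpha)$-planarity requires the \emph{fixed} plane $V_j=\theta_j^\perp$; the paper closes this by taking caps of radius $\alpha/3$ and spending the extra $\alpha/3$ to pass from $\nu(p)^\perp$ to $\theta_j^\perp$, and you can do the same (or simply apply your two half-space inequalities directly with $\theta_j$ in place of $\nu(p)$, which gives $|(q-p)\cdot\theta_j|\le\max(|\theta_j-\nu(p)|,|\theta_j-\nu(q)|)\,|q-p|$).
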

  
\begin{proof}
Let $C\subset\Rn$ be convex. For any $x\in\partial C$, we may choose
$\theta(x)\in S^{n-1}$ such that $H(x,\theta(x))\cap
C=\emptyset$. This defines a mapping $\theta\colon\partial C\to
S^{n-1}$. Let $\tilde\theta\in S^{n-1}$ and 
$B=B(\tilde\theta,\frac{\alpha}3)\cap S^{n-1}$.
Now, if $x,y\in \theta^{-1}(B)$, 
then $|\theta(y)-\theta(x)|\leq\frac 23\alpha$. 
Since $x\notin H(y,\theta(y))$ and $y\notin H(x,\theta(x))$, this yields to
\begin{equation*}
\dist(y-x,\theta(x)^\perp)=\vert(y-x)\cdot\theta(x)\vert
\leq\tfrac23\alpha|y-x|, 
\end{equation*}
see Figure \ref{angles}, and so 
$y\in X(x,\theta(x)^\perp,\frac23\alpha)$. (Here we use the notation 
$\theta(x)^\perp$ for the orthogonal complement of the line spanned by 
$\theta(x)$.) Combining this with the fact that
$\theta(x)^\perp\subset X(0,\tilde\theta^\perp,\frac\alpha3)$ implies that 
$y\in X(x,\tilde\theta^\perp,\alpha)$, 
and hence $\theta^{-1}(B)$ is $(n-1,\alpha)$-planar. Covering $S^{n-1}$
with $M=M(n,\alpha)$ balls of radius $\frac\alpha3$ and taking their
preimages under $\theta$ gives the claim.   
\end{proof}

\begin{figure}
\psfrag{a}{$x$ }\psfrag{b}{$y$}\psfrag{c}{$\theta(x)$}\psfrag{d}{$\theta(y)$}
\psfrag{e}{$\le\frac 23\alpha$}
\begin{center}
\includegraphics[scale=1.0]{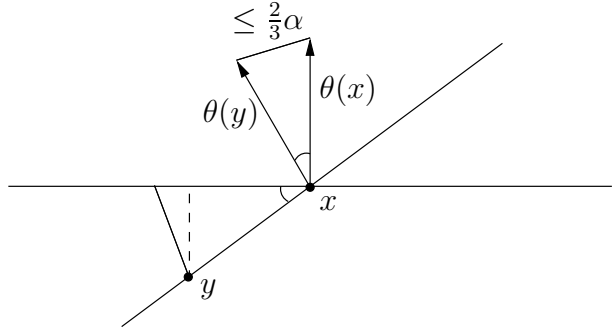}
\end{center}
\caption{Illustration for the proof of Lemma~\ref{lemma:rec}: The extreme 
positions of $x$ and $y$.}
\label{angles}
\end{figure}

The next lemma is used to give a quantitative estimate of how much one needs to translate
a tilted half-space such that it will not intersect a given neighbourhood
of a planar set provided that the untilted half-space does not meet the
neighbourhood.

\begin{lemma}\label{lemma:hspaces}
Letting $0<c<1$, $0<\alpha<\sin(\frac\pi 2-\arccos c)$, and $V\in G(n,m)$,
suppose that $P\subset\Rn$ is $(V,\alpha)$-planar.
If $0<\delta\leq\beta$, $x\in P(\beta)$, $\theta\in S^{n-1}$ with
$|\proj_V(\theta)|\geq c$, and 
$\theta'=\proj_V(\theta)/|\proj_V(\theta)|$, then
\begin{equation*}
H(x+c'\beta\theta',\theta')\cap P(\beta) \subset
H(x+\delta\theta,\theta),
\end{equation*}
where
$c'=c'(\alpha,c)=\frac{2(\sin(\arccos c+\arcsin\alpha))^{-1}+1}
{\sin(\frac\pi 2-\arccos c-\arcsin\alpha)}$.
\end{lemma}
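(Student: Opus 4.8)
The plan is to reduce the general case to an elementary planar estimate. Fix $x \in P(\beta)$, so there is a point $p \in P$ with $|x-p|\le\beta$, and write $V_\theta$ for the $2$-plane spanned by $\theta$ and $\theta' = \proj_V(\theta)/|\proj_V(\theta)|$ (if $\theta$ is already parallel to $V$ then $\theta=\theta'$ and the claim is trivial, so assume not). Since $|\proj_V(\theta)|\ge c$, the angle $\varphi$ between $\theta$ and $\theta'$ satisfies $\cos\varphi = |\proj_V(\theta)|\ge c$, i.e. $\varphi\le\arccos c$. I would take an arbitrary $y$ with $y\in H(x+c'\beta\theta',\theta')$ and $y\in P(\beta)$, pick $q\in P$ with $|y-q|\le\beta$, and aim to show $(y-x-\delta\theta)\cdot\theta > 0$, that is, $(y-x)\cdot\theta > \delta$. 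Since $\delta\le\beta$, it suffices to prove $(y-x)\cdot\theta>\beta$.

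The key geometric input is that $q-p$ is nearly parallel to $V$: because $P$ is $(V,\alpha)$-planar, $q\in X(p,V,\alpha)$, so $|\proj_{V^\perp}(q-p)|\le\alpha|q-p|$, which means the angle between $q-p$ and $V$ is at most $\arcsin\alpha$. Consequently the angle between $q-p$ and the unit vector $\theta'\in V$ differs from the angle between $q-p$ and $\theta$ by at most $\arccos c + \arcsin\alpha$; this is exactly where the hypothesis $\alpha<\sin(\tfrac\pi2-\arccos c)$ (equivalently $\arccos c + \arcsin\alpha < \tfrac\pi2$) is used, to keep the relevant projections positive. Decompose $y-x = (y-q)+(q-p)+(p-x)$. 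The two error terms $(y-q)$ and $(p-x)$ have norm at most $\beta$ each, contributing at least $-2\beta$ to $(y-x)\cdot\theta$ after pairing with $\theta$ (and similarly with $\theta'$). For the main term, the hypothesis $y\in H(x+c'\beta\theta',\theta')$ gives $(y-x)\cdot\theta' > c'\beta$, hence $(q-p)\cdot\theta' > c'\beta - 2\beta = (c'-2)\beta$. I then convert this lower bound on $(q-p)\cdot\theta'$ into a lower bound on $(q-p)\cdot\theta$: writing $|q-p|\cos\psi'$ and $|q-p|\cos\psi$ for these two inner products where $\psi'$, $\psi$ are the respective angles, the angle comparison above gives $\psi\le\psi'+(\arccos c+\arcsin\alpha)$, and a short trigonometric estimate (using $\cos\psi'>(c'-2)\beta/|q-p|$ together with the fact that the worst case is $\psi'$ as large as possible) yields $(q-p)\cdot\theta \ge \sin(\tfrac\pi2-\arccos c-\arcsin\alpha)\cdot(c'-2)\beta \cdot \big(\sin(\arccos c + \arcsin\alpha)\big)^{-1} \cdot (\text{something}) - \dots$; matching this against the required $(y-x)\cdot\theta>3\beta$ (to absorb the two $-\beta$ terms and land above $\beta$) is precisely what forces the stated value of $c' = c'(\alpha,c)$.

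The main obstacle is the bookkeeping in that last trigonometric conversion: one is rotating a vector by an angle of at most $\arccos c + \arcsin\alpha$ inside the plane $V_\theta$ (or a plane close to it), and must track how a positive inner product with $\theta'$ degrades into a still-positive, quantitatively controlled inner product with $\theta$, while the two additive $\beta$-errors shift both the hypothesis and the conclusion. I would handle this by working entirely in the plane containing $\theta$ and $\theta'$, projecting $q-p$ there, checking that the projection only improves the estimate, and then solving the resulting linear inequality in $c'$; the formula for $c'$ in the statement is exactly the solution of that inequality, so once the geometry is set up correctly the verification is routine. No step requires anything beyond Definition \ref{def:KSpor}'s notation and plane trigonometry.
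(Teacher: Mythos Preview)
Your route (pick $y$ on the left, decompose $y-x=(y-q)+(q-p)+(p-x)$ through points $p,q\in P$, and convert the lower bound on $(q-p)\cdot\theta'$ into one on $(q-p)\cdot\theta$) is the direct attack; the paper argues the contrapositive instead, enclosing $P(\beta)$ in the fattened cone $X(x,V,\alpha)(2\beta)$, intersecting with $\{(y-x)\cdot\theta\le\delta\}$, and reading off the maximum of $(y-x)\cdot\theta'$ from a single planar triangle with vertices $w=x+\delta\theta$, $z=x-\tfrac{2\beta}{\sin(\arccos c+\arcsin\alpha)}\theta$, and the extremal $y$.

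The ``short trigonometric estimate'' you defer is where your argument breaks. Write $q-p=a\theta'+v+w$ with $a\in\R$, $v\in V\cap(\theta')^{\perp}$, $w\in V^{\perp}$, and $\theta=(\cos\varphi)\theta'+(\sin\varphi)\nu$ with $\nu\in V^{\perp}\cap S^{n-1}$. Then $(q-p)\cdot\theta'=a$ but $(q-p)\cdot\theta=a\cos\varphi+(w\cdot\nu)\sin\varphi$, and $(V,\alpha)$-planarity gives only $|w|\le\alpha|q-p|$, i.e.\ $|w|\le\tfrac{\alpha}{\sqrt{1-\alpha^{2}}}\sqrt{a^{2}+|v|^{2}}$. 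Nothing in your hypotheses controls $|v|$: with $a>0$ fixed and $|v|\to\infty$ one can take $w$ parallel to $-\nu$ of nearly maximal length, so that $(q-p)\cdot\theta\to-\infty$ while $(q-p)\cdot\theta'=a$ stays fixed. (Concretely, for $m\ge 2$ pick a unit $e\in V\cap(\theta')^{\perp}$ and set $P=\{0,\ a\theta'+Le-\tfrac{\alpha}{2}L\nu\}$ with $L$ large; this $P$ is $(V,\alpha)$-planar and defeats any inequality of the shape you need.) Your angle bound $\psi\le\psi'+\arccos c+\arcsin\alpha$ does not rescue this, because $\psi'$ is close to $\tfrac{\pi}{2}$ precisely when $|v|$ is large, allowing $\psi>\tfrac{\pi}{2}$. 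The paper's computation lives entirely in the $2$-plane through $\theta$ and $\theta'$, where the transverse $V$-direction $v$ is absent; any correct version of your argument would have to make an analogous reduction before the trigonometric comparison can go through.
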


\begin{proof}
We assume that $|\proj_V(\theta)|=c$. In the case $|\proj_V(\theta)|>c$ 
one may use a 
similar argument and show that the number $c'$ can be replaced by a smaller
one. First observe that $P(\beta)\subset X(x,V,\alpha)(2\beta)$. Let
\begin{align*}
A&=X(x,V,\alpha)(2\beta)\setminus H(x+\delta\theta,\theta),\\
w&=x+\delta\theta,\text{ and}\\ 
z&=x-\frac{2\beta\theta}{\sin(\arccos c +\arcsin \alpha)}, 
\end{align*}
and take $y\in A$ which maximises $(y-x)\cdot\theta'$, see
Figure \ref{fig:hspaces}. Now the angle $\varangle wyz$ is
$\frac\pi2-\arccos c-\arcsin\alpha$ and since 
\begin{equation*}
|z-w|\leq\left(\frac 2{\sin(\arccos c+\arcsin\alpha)}+1\right)\beta,
\end{equation*}
we may estimate
\begin{equation*}
\vert(y-x)\cdot\theta'\vert\leq|y-z| 
    =\frac{|z-w|}{\sin(\frac\pi2-\arccos c-\arcsin\alpha)} 
    \leq c'\beta.
\end{equation*}
\end{proof}

\begin{figure}[t]
  \centering
  \scalebox{1.0}{\input{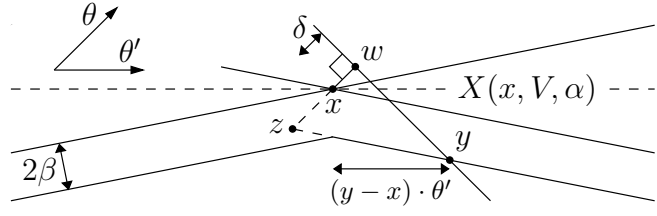}}
  \caption{Illustration for the proof of Lemma \ref{lemma:hspaces}}
  \label{fig:hspaces}
\end{figure}

The following remark will be useful when proving Theorem \ref{thm:Mdim}.

\begin{remark}\label{rem:Lip}
Let $A\subset\Rn$, $0<\alpha<1$, and $V\in G(n,m)$. Then $A$ is
$(V,\alpha)$-planar if and only if there is a Lipschitz mapping 
$f\colon\proj_V(A)\to V^\perp$ (we identify $\Rn$ with the direct sum $V+V^\perp$) 
with Lipschitz constant $\alpha/\sqrt{1-\alpha^2}$ such that $A$ is the
graph of $f$. It follows now from the Kirszbraun's theorem, see
\cite[\S 2.10.43]{Fe}, that $A$ can be extended, that is, there is a
$(V,\alpha)$-planar set $A'\subset\Rn$ such that $A\subset A'$ and 
$\proj_V(A')=V$.  
\end{remark}

Now we are ready to verify our main result concerning the upper Minkowski 
dimension of sets which are uniformly $k$-porous with respect to the scale $r$.

\begin{theorem}\label{thm:Mdim}
Let $0<\roo<\tfrac12$ and $r_0>0$. Assuming that $A\subset\Rn$ is a bounded set
with $\por_k(A,x,r)>\roo$ for every $x\in A$ and $0<r<r_0$, we have
\begin{equation*}
\dimM(A)\leq n-k+\frac c{\log\frac1{1-2\roo}}\,,
\end{equation*} 
where $c=c(n,k)$ is a constant depending only on $n$ and $k$.
\end{theorem}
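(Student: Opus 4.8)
My plan is to reduce the theorem to a single-scale covering estimate and then iterate it over scales. Concretely, I would prove that there are constants $C_0=C_0(n,k)\ge 1$ and $\alpha=\alpha(n,k)\in(0,1)$ such that, whenever $\sqrt2-1<\roo<\tfrac12$, $x\in A$ and $0<r<\tfrac{r_0}{2t}$ (with $t=t(\roo)$), the set $A\cap B(x,r)$ can be covered by at most $C_0\,\delta^{-(n-k)}$ balls of radius $C_0\,\delta r$, where $\delta=\delta(\roo)$. Granting this, and assuming $\roo$ close enough to $\tfrac12$ that $\lambda:=C_0\delta<1$, one iterates the estimate on a cover of the bounded set $A$ by finitely many balls of radius $\le\min\{1,r_0/(2t)\}$ centred in $A$; after $j$ steps $A$ is covered by $\lesssim(C_0\delta^{-(n-k)})^{j}$ balls of radius $\le\lambda^{j}$, so
\[
\dimM(A)\le\frac{\log\bigl(C_0\,\delta^{-(n-k)}\bigr)}{\log\bigl(1/(C_0\delta)\bigr)}=(n-k)+\frac{(n-k+1)\log C_0}{\log(1/\delta)-\log C_0}.
\]
Since $0<\delta(\roo)<4\sqrt{1-2\roo}$ by \eqref{eq:roo}, we have $\log(1/\delta)\ge\tfrac14\log\tfrac{1}{1-2\roo}$ once $\roo$ is sufficiently close to $\tfrac12$, which turns the last term into $c/\log\tfrac1{1-2\roo}$. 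For the remaining $\roo$ (bounded away from $\tfrac12$, in particular the range $\roo\le\sqrt2-1$ where Lemma \ref{lemma:hballs} is not available) the right-hand side of the asserted inequality exceeds $n$ and the estimate is trivial; enlarging $c=c(n,k)$ accordingly finishes the reduction.

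For the covering estimate, fix $x\in A$ and such an $r$, and let $C$ be the closed convex hull of $A\cap B(x,r)$ (passing to $\overline A$ if needed; the strict inequality in Definition \ref{def:KSpor} leaves room for this). Applying Lemma \ref{lemma:hballs} at an arbitrary $y\in A\cap B(x,r)$ to one of the $k$ directions shows $C\subset\{z:(z-y)\cdot\theta_1\le 2\delta r\}$, so the ray from $y$ in direction $\theta_1$ leaves $C$ within distance $2\delta r$, whence $\dist(y,\partial C)\le 2\delta r$; thus $A\cap B(x,r)\subset(\partial C)(2\delta r)$. By Lemma \ref{lemma:rec}, $\partial C=\bigcup_{i=1}^{M}P_i$ with each $P_i$ being $(V_i,\alpha)$-planar, $V_i\in G(n,n-1)$, $M=M(n,\alpha)$, so $A\cap B(x,r)\subset\bigcup_{i=1}^{M}P_i(2\delta r)$. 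In the base case $k=1$ this already suffices: by Remark \ref{rem:Lip} each $P_i$ is a Lipschitz graph over $V_i$ with constant $\le\alpha/\sqrt{1-\alpha^2}$, so $P_i(2\delta r)\cap B(x,r)$ is covered by $\lesssim\delta^{-(n-1)}$ balls of radius $\lesssim\delta r$ (cover $\proj_{V_i}$ of it by $\delta^{-(n-1)}$ balls of radius $\delta r$ in $V_i$ and lift, each fibre having diameter $O(\delta r)$), giving $C_0(n)\,\delta^{-(n-1)}$ balls of radius $C_0(n)\,\delta r$ in all.

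For $k\ge 2$ I would insert the crucial step between these two. Fix a piece $P=P_i$, $V=V_i$, unit normal $u$ of $V$. Given $y\in A\cap P(2\delta r)\cap B(x,r)$, Lemma \ref{lemma:hballs} gives orthonormal $\theta_1,\dots,\theta_k$ with $A\cap B(x,r)\cap H(y+2\delta r\theta_j,\theta_j)=\emptyset$. Since $\sum_j(\theta_j\cdot u)^2\le 1$, discarding the index with the largest $|\theta_j\cdot u|$ leaves $k-1$ directions with $|\proj_V(\theta_j)|\ge 1/\sqrt2$, and for the unit vectors $\theta_j'=\proj_V(\theta_j)/|\proj_V(\theta_j)|\in V$ one checks $|\theta_j'\cdot\theta_{j'}'|\le\tfrac12$ for $j\ne j'$. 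Choosing $\alpha$ small, Lemma \ref{lemma:hspaces} (with $c=1/\sqrt2$, $\beta=2\delta r$) gives $H(y+2c'\delta r\,\theta_j',\theta_j')\cap P(2\delta r)\subset H(y+2\delta r\theta_j,\theta_j)$, hence $A\cap P(2\delta r)\cap B(x,r)\cap H(y+2c'\delta r\,\theta_j',\theta_j')=\emptyset$ for $j=1,\dots,k-1$. As each $\theta_j'\in V$, these half-spaces depend only on $\proj_V$, so $\widetilde A:=\proj_V(A\cap P(2\delta r)\cap B(x,r))\subset V\cong\R^{n-1}$ has, near every one of its points, $k-1$ half-space holes of translation $2c'\delta r$ whose normals are pairwise transversal. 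This is precisely the inductive object in $\R^{n-1}$: rerunning the scheme (closed convex hull, Lemma \ref{lemma:rec}, Lemma \ref{lemma:hspaces}) one covers $\widetilde A$ by $\le C(n-1,k-1)\,\delta^{-(n-k)}$ balls of radius $\lesssim\delta r$ in $V$; lifting each back through the Lipschitz graph $P$ (Remark \ref{rem:Lip}), whose fibres over $V$ have diameter $O(\delta r)$ inside $P(2\delta r)$, costs a factor $O(1)$. Multiplying over the $M$ pieces yields $C_0\,\delta^{-(n-k)}$ balls of radius $C_0\,\delta r$ with $C_0=C_0(n,k)$ absorbing $M$, $c'$, the Lipschitz constants and the inductive constant.

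The main obstacle is the bookkeeping in this inductive step. One must run the reduction with half-space holes (not balls) and verify that the residual $k-1$ hole-directions — and, after $m$ reductions, $k-m$ directions — stay quantitatively transversal after being orthogonally projected into successively smaller subspaces and renormalised. The elementary input is that the "transversality budget'' $\sum_j(\theta_j\cdot u)^2\le 1$ (and its analogue $\sum_j(\nu_j\cdot u)^2\le 1+(j-1)\gamma$ for an already-tilted family with pairwise inner products $\le\gamma$) forces, after discarding the direction most aligned with $u$, the projected and renormalised normals to keep pairwise inner products bounded away from $\pm1$ through all $\le k$ reductions; anti-parallel pairs, should they arise, are harmless, since they exhibit an extra planar direction and only improve the count. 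Keeping $\alpha=\alpha(n,k)$ uniformly small (so Lemma \ref{lemma:hspaces} applies and composed graphs remain uniformly Lipschitz at every level) and tracking how the translation sizes and $c'$ compound are routine, but must be carried out with care so that all constants depend on $n,k$ only.
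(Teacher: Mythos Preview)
Your global strategy matches the paper's: prove a single–scale covering estimate of the form ``$A\cap B(x,r)$ is covered by $C_0\,\delta^{-(n-k)}$ balls of radius $\sim\delta r$'' and then iterate over scales, using \eqref{eq:roo} to convert $\log(1/\delta)$ into $\log\tfrac{1}{1-2\roo}$. The first reduction is also the same: Lemma~\ref{lemma:hballs} forces $A\cap B(x,r)$ into a $2\delta r$-neighbourhood of the boundary of a convex body, and Lemma~\ref{lemma:rec} splits that boundary into $M(n,\alpha)$ pieces that are $(n-1,\alpha)$-planar.

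The divergence is in the inductive step for $k\ge 2$. You propose to project the $k$ orthonormal directions onto the hyperplane $V$, discard the one most aligned with the normal, and carry the remaining $k-1$ projected, renormalised directions into a recursive instance in $\R^{n-1}$; then iterate, shedding one direction per step. As you yourself flag, the obstacle is keeping these successively projected families quantitatively transversal. Your bound $|\theta_j'\cdot\theta_{j'}'|\le\tfrac12$ after the \emph{first} projection is correct, but from the second step on the family is no longer orthonormal, and the Gershgorin-type budget $\sum_j(\nu_j\cdot u)^2\le 1+(k{-}2)\gamma$ does not by itself stop pairwise inner products from drifting to $1$. Parallel (not anti-parallel) pairs are fatal: two coincident hole directions carry the information of one, costing an extra factor $\delta^{-1}$ in the covering count. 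Your sketch does not close this gap.

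The paper sidesteps this entirely. At each level, with the current piece $(V,\alpha_m)$-planar for some $V\in G(n,m)$ and $m>n-k$, it re-invokes Lemma~\ref{lemma:hballs} at every $y$ to obtain $k$ orthonormal $\theta_1,\dots,\theta_k\in\R^n$, and uses only the pigeonhole observation
\[
\sum_{j=1}^{k}|\proj_V(\theta_j)|^2 \;=\; k-\sum_{j=1}^{k}|\proj_{V^\perp}(\theta_j)|^2 \;\ge\; k-(n-m)\;\ge\;1,
\]
so some single $\theta_j$ has $|\proj_V(\theta_j)|\ge 1/\sqrt{k}$. That one direction is fed into Lemma~\ref{lemma:hspaces} with $c=1/\sqrt{k}$, producing a convex body $C'\subset V$ whose boundary (again via Lemma~\ref{lemma:rec}) furnishes the $(m-1)$-planar pieces; these are then lifted back through the Lipschitz graph of $P$ (Remark~\ref{rem:Lip}). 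No family of directions is carried across levels, and the only constants in play are $M(m,\alpha_m)$ and $c'(\alpha,1/\sqrt{k})$. Replacing your ``discard one, project the rest'' device by this ``re-invoke and pick one'' step would remove the transversality bookkeeping altogether.
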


\begin{proof}
The idea of the proof is as follows: Assuming that all the points in
$A\cap B(x,r)$ are porous and using Lemma \ref{lemma:hballs}, one finds
half-spaces which do not meet $A\cap B(x,r)$. After removing these, one is
left with a convex set $C\subset B(x,r)$ such that all the points in
$A\cap B(x,r)$ are close to the boundary of $C$ and the distance is
proportional to $r\sqrt{\frac12-\roo}$. This implies the claim in the case 
$k=1$.
For $k\ge 2$, we divide the boundary $\partial C$ into planar subsets
$P_i$ and repeat the above process for the projections of each of the sets
$P_i$ into $\mathbb R^{n-1}$. As the result we see that $A\cap B(x,r)$ is
close to a $(n-2)$-dimensional set. This procedure may be repeated $k$
times since there are $k$ orthogonal directions with holes.

Since it is enough to prove the claim for sufficiently large $\roo$, we
may assume that 
\begin{equation}\label{eq:log}
\log\frac1{4\sqrt{1-2\roo}}>\tfrac13\log\frac 1{1-2\roo}.
\end{equation}
Let $0<\alpha<\sin(\frac\pi2-\arccos\frac1{\sqrt{k}})$, and let
$t=t(\roo)$ and $\delta=\delta(\roo)$ be as in \eqref{deft} and
\eqref{defrho}, respectively. For any positive integer $m$ with 
$n-k\leq m\leq n-1$, define $\alpha_m=2^{\frac12(n-k-m+1)}\alpha$. 
Moreover, letting $c_1=c'(\alpha,\frac1{\sqrt{k}})$ be the constant of 
Lemma~\ref{lemma:hspaces}, set $c_2=1+(c_1+1)/\sqrt{1-\alpha^2}$. 

Fix $x\in A$ and $0<r<\frac{r_0}{2t}$. Taking any $y\in A\cap B(x,r)$, let 
$\theta(y)\in S^{n-1}$ be one of the vectors 
$\theta_1,\ldots,\theta_k\in S^{n-1}$ given by Lemma \ref{lemma:hballs}.
Define 
\begin{equation*}
C=\bigcap_{y\in A\cap B(x,r)}\Rn\setminus H\bigl(y+2\delta
r\theta(y),\theta(y)\bigr).  
\end{equation*}
(Here we could replace $\mathbb R^n$ with $B(x,r)$. However, our choice
makes the inductive step somewhat simpler.)
Then $C$ is non-empty and convex, and furthermore by \eqref{empty},
$A\cap B(x,r)\subset(\partial C)(2\delta r)$. 
Using Lemma \ref{lemma:rec}, we obtain 
\begin{equation*}
\partial C=\bigcup_{i=1}^{M(n,\alpha_{n-1})}P_{n-1,i}, 
\end{equation*}
where the constant $M(n,\alpha_{n-1})$ depends only on $n$ and $\alpha_{n-1}$,
and each $P_{n-1,i}$ is $(n-1,\alpha_{n-1})$-planar. This, in turn, gives
that
\begin{equation*} 
A\cap B(x,r)\subset\bigcup_{i=1}^{M(n,\alpha_{n-1})}P_{n-1,i}(2\delta r).
\end{equation*}

\begin{figure}
\psfrag{a}{$z$}\psfrag{b}{$z'$}\psfrag{c}{$V$}\psfrag{d}{$P$}
\psfrag{e}{$\widehat P$}\psfrag{f}{$d_4$}\psfrag{g}{$d_1$}
\psfrag{h}{$\proj_V(z)$}\psfrag{i}{$d_2$}\psfrag{j}{$d_3$}
\begin{center}
\includegraphics[scale=1.0]{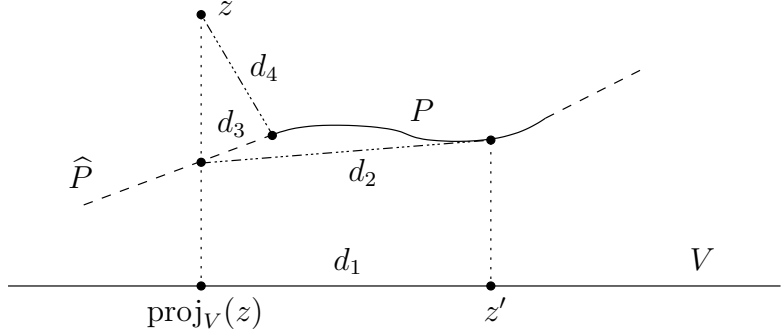}
\end{center}
\caption{A 2-dimensional illustration for the proof of Theorem \ref{thm:Mdim}: 
How much one needs to enlarge the neighbourhood in
the induction step? Here $z\in P(c_2^{n-m-1}2\delta r)$,
$z'\in P'_j(c_1c_2^{n-m-1}2\delta r)$, and $\beta=c_2^{n-m-1}2\delta r$.
Further, $d_1\le c_1\beta$, $d_2\le c_1\beta/\sqrt{1-\alpha^2}$,
$d_3\le\beta/\sqrt{1-\alpha^2}$, and $d_4\le\beta$.}
\label{enlargement}
\end{figure}

If $k\geq 2$, then we continue inductively: Let $n-k<m\le n-1$ and
suppose that we are given  
$(m,\alpha_m)$-planar sets $P_{m,1},\dots,P_{m,l_m}$, where
\[l_m=M(n,\alpha_{n-1})\prod_{j=m+1}^{n-1} M(j,\alpha_j),\] 
such that
\begin{equation*}
A\cap B(x,r)\subset\bigcup_{i=1}^{l_m}P_{m,i}(c_{2}^{n-m-1}2\delta r).
\end{equation*} 
Consider a positive integer $i$ with $1\le i\le l_m$. Abbreviating
$P=P_{m,i}$, let $V\in G(n,m)$ be such that $P$ is
$(V,\alpha_m)$-planar. For every 
$y\in A\cap B(x,r)\cap P(c_{2}^{n-m-1}2\delta r)$, choose orthogonal vectors 
$\theta_1,\ldots,\theta_k\in S^{n-1}$ as in Lemma
\ref{lemma:hballs}. Since $m>n-k$, there is
$\theta\in\{\theta_1,\ldots,\theta_k\}$ for which 
$|\proj_V(\theta)|\geq\frac 1{\sqrt{k}}$. Setting 
$\theta'(y)=\proj_V(\theta)/|\proj_V(\theta)|$, define
\begin{equation*}
C'=\bigcap_{y\in  A\cap B(x,r)\cap P(c_{2}^{n-m-1}2\delta r)}
   V\setminus H\left(\proj_V(y)+c_1 c_{2}^{n-m-1}2\delta r \theta'(y),\theta'(y)\right).
\end{equation*}
It follows from Lemmas \ref{lemma:hballs} and \ref{lemma:hspaces} that 
\begin{equation*}
\proj_V\bigl(A\cap B(x,r)\cap P(c_{2}^{n-m-1}2\delta r)\bigr)\subset 
(\partial C')(c_{1} c_{2}^{n-m-1}2\delta r).\] Moreover,
$C'\subset V$ is convex, and by Lemma \ref{lemma:rec}, its boundary 
$\partial C'$ can be decomposed into $M(m,\alpha_m)$ parts $P'_j$ all of 
which are $(m-1,\alpha_m)$-planar. Using Remark \ref{rem:Lip}, we find
a $(V,\alpha_m)$-planar set $\widehat P$ such that $P\subset\widehat P$ and
$\proj_V(\widehat P)=V$. The r\^ole of $\widehat P$ is to guarantee that
$\widehat P\cap\proj_V^{-1}(P'_j)\ne\emptyset$ for all $j$.  
For all $j\in\{1,\dots,M(m,\alpha_m)\}$ the sets 
$\widetilde{P}_j=\widehat P\cap\proj_V^{-1}(P'_j)$ are 
$(m-1,\alpha_{m-1})$-planar, and moreover, 
\begin{equation*} 
A\cap B(x,r)\cap P(c_{2}^{n-m-1}2\delta r)
\subset\bigcup_{j=1}^{M(m,\alpha_m)} 
\widetilde{P}_j(c_{2}^{n-m}2\delta r),
\end{equation*}
see Figure \ref{enlargement}.
As the result of this inductive process we may find
$(n-k,\alpha_{n-k})$-planar sets $P_{n-k,1},\dots,P_{n-k,l_{n-k}}$, where
$l_{n-k}=M(n,\alpha_{n-1})\prod_{j=n-k+1}^{n-1}M(j,\alpha_{j})$, such that
\begin{equation}\label{final}
A\cap B(x,r)\subset
\bigcup_{i=1}^{l_{n-k}}P_{n-k,i}(c_{2}^{k-1}2\delta r).
\end{equation}

It is not hard to verify that there is a constant $C(\alpha,n,k)$ depending
only on $\alpha$, $n$, and $k$ such that each of the sets
$P_{n-k,i}(c_2^{k-1}2\delta r)\cap B(x,r)$ can be covered with 
$C(\alpha,n,k)\delta^{k-n}$ balls of radius $\delta r$, and therefore 
by \eqref{final}, $C(\alpha,n,k)l_{n-k}\delta^{k-n}$ such balls will cover 
the set $A\cap B(x,r)$. Iterating this and defining 
$c_0=C(\alpha,n,k)l_{n-k}$ gives for all positive integers $i$ that
$N(A,\delta,i)\le N_0(c_0\delta^{k-n})^{i-i_0}$, where $i_0$ is the smallest
integer with $\delta^{i_0}<\frac{r_0}{2t}$ and
$N_0$ is a positive integer such that 
$A\subset\bigcup_{j=1}^{N_0}A\cap B(x_j,\delta^{i_0})$ for some $x_j\in A$. 
Taking logarithms and using \eqref{eq:roo} and \eqref{eq:log} gives 
\begin{equation*}
\begin{split}
  \dimM(A) &\leq
  \limsup_{i\to\infty}\frac{\log\bigl(N_0(c_0\delta^{k-n})^{i-i_0}\bigr)}
  {i\log\frac1\delta}
  =n-k+\frac{\log c_0}{\log\frac1\delta}\\ 
  &\le n-k+\frac{c}{\log\frac1{1-2\roo}}
\end{split}
\end{equation*}
where $c=3\log c_0$ is a constant depending only on $n$ and $k$.
\end{proof}

For the Hausdorff and packing dimensions we have the following immediate
consequence:

\begin{corollary}\label{cor:pdim}
Let $0<\roo<\tfrac12$ and suppose that $A\subset\Rn$ with $\por_k(A)>\roo$. Then 
\begin{equation*}
\dimH(A)\leq\dimp(A)\leq n-k+\frac c{\log\frac1{1-2\roo}},
\end{equation*}
 where $c$ is the constant of Theorem \ref{thm:Mdim}.
\end{corollary}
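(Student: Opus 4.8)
The plan is to derive the corollary from Theorem~\ref{thm:Mdim} by localisation. The inequality $\dimH(A)\le\dimp(A)$ is well known, so the content is the upper bound for $\dimp(A)$. The one subtlety is that $\por_k(A)>\roo$ only guarantees $\por_k(A,x,r)>\roo$ for scales $r$ below a threshold depending on $x$, whereas Theorem~\ref{thm:Mdim} requires a threshold $r_0$ uniform on the whole set. This is handled by decomposing $A$ into countably many pieces on which a uniform threshold exists and then using that $\dimp$, unlike $\dimM$, is countably stable.

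First I would note the elementary monotonicity $\por_k(B,x,r)\ge\por_k(A,x,r)$ whenever $B\subset A$: since $B(x,r)\setminus A\subset B(x,r)\setminus B$, every admissible configuration of holes for $A$ is also one for $B$. Next, for $j,m\in\N$ put
\[A_{j,m}=\bigl\{x\in A\cap B(0,m):\por_k(A,x,r)>\roo\text{ for all }0<r<\tfrac1j\bigr\}.\]
Each $A_{j,m}$ is bounded, and because $\por_k(A,x)=\liminf_{r\downarrow0}\por_k(A,x,r)\ge\por_k(A)>\roo$ for every $x\in A$, each such $x$ belongs to some $A_{j,m}$; hence $A=\bigcup_{j,m\in\N}A_{j,m}$. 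By the monotonicity just noted, $\por_k(A_{j,m},x,r)\ge\por_k(A,x,r)>\roo$ for all $x\in A_{j,m}$ and $0<r<\frac1j$, so Theorem~\ref{thm:Mdim} applied to $A_{j,m}$ with $r_0=\frac1j$ gives
\[\dimM(A_{j,m})\le n-k+\frac{c}{\log\frac1{1-2\roo}}\qquad(j,m\in\N),\]
where $c=c(n,k)$ is the constant of Theorem~\ref{thm:Mdim}.

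Finally I would invoke the characterisation of the packing dimension as the modified upper Minkowski dimension,
\[\dimp(E)=\inf\Bigl\{\sup_{i\in\N}\dimM(E_i):E\subset\bigcup_{i\in\N}E_i\Bigr\},\]
where the infimum runs over all countable covers of $E$ by bounded sets $E_i\subset\Rn$ (see \cite[\S 5.9]{M}). Applying this to the cover $A=\bigcup_{j,m}A_{j,m}$ yields $\dimp(A)\le n-k+c/\log\frac1{1-2\roo}$, and combined with $\dimH(A)\le\dimp(A)$ this proves the corollary. The only non-routine point is the reduction described above from the pointwise $\liminf$ hypothesis to the uniform-scale hypothesis of Theorem~\ref{thm:Mdim}; everything else is bookkeeping.
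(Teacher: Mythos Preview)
Your proof is correct and is precisely the argument the paper has in mind: the paper's one-line proof simply says to represent $A$ as a countable union of sets satisfying the hypotheses of Theorem~\ref{thm:Mdim}, and your sets $A_{j,m}$ together with the monotonicity $\por_k(A_{j,m},x,r)\ge\por_k(A,x,r)$ and the identification of $\dimp$ with the modified upper Minkowski dimension carry this out explicitly.
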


\begin{proof}
Representing $A$ as a countable union of sets satisfying the assumptions of 
Theorem \ref{thm:Mdim} gives the claim.
\end{proof}

\begin{remark}\label{sharp}  
The estimates of Theorem \ref{thm:Mdim} and Corollary \ref{cor:pdim} are 
asymptotically sharp. In fact, for any $1\le k\le n-1$ there is a constant 
$c'=c'(n,k)$ with the following property: for all $0<\roo<\tfrac12$ there exists
$A_\roo\subset\Rn$ with 
\begin{equation*}
\dim_H(A_\roo)>n-k+\frac{c'}{\log\frac1{1-2\roo}}
\end{equation*} 
and 
$\por_{k}(A_\roo,x,r)>\roo$ for all $x\in\Rn$ and $r>0$. 
The sets $C_\lambda^{k} \times [0,1]^{n-k}$ serve as
natural examples. Here $C_\lambda \subset [0,1]$ is the $\lambda$-Cantor set, 
see \cite[\S 4.10]{M}. When $k=1$, the straightforward calculation can be 
found from Salli \cite[Remark 3.8.2(1)]{sa2}.
\end{remark}

\bibliographystyle{abbrv}
\bibliography{jjks.bib}

\end{document}